\theoremstyle{plain}
\newtheorem{thm}{Theorem}[section]
\newtheorem{cor}[thm]{Corollary}
\newtheorem{prop}[thm]{Proposition}
\newenvironment{prob}{\problem\rm}{\endproblem}
\theoremstyle{definition}
\newtheorem{rem}[thm]{Remark}
\DeclareMathOperator{\Sol}{Sol}
\DeclareMathOperator{\diam}{diam}
\DeclareMathOperator{\girth}{girth}
\DeclareMathOperator{\PSL}{PSL}
\DeclareMathOperator{\Sz}{Sz}
\DeclareMathOperator{\GF}{GF}
\newcommand{\PGamL}{\mathop{\mathrm{P}\Gamma\mathrm{L}}}
\begin{document}
\title[Solvable conjugacy class graph of groups]{Solvable conjugacy class graph of groups}

\author[Bhowal, Cameron, Nath and Sambale]{Parthajit Bhowal, Peter J. Cameron, Rajat Kanti Nath and Benjamin Sambale}

\address{P. Bhowal, Department of Mathematics, Tezpur University, Napaam-784028, Sonitpur, Assam, India. Department of Mathematics, Cachar College, Silchar-788001, Assam, India.}
\email{bhowal.parthajit8@gmail.com}

\address{P. J. Cameron, School of Mathematics and Statistics, University of St Andrews, North Haugh, St Andrews, Fife, KY16 9SS, UK.}
\email{pjc20@st-andrews.ac.uk}

\address{R. K. Nath, Department of Mathematics, Tezpur University, Napaam-784028, Sonitpur, Assam, India.}
\email{rajatkantinath@yahoo.com}

\address{B. Sambale, Institut f\"ur Algebra, Zahlentheorie und Diskrete Mathematik, Leibniz Universit\"at Hannover, 30167 Hannover, Germany}
\email{sambalemath.uni-hannover.de}

\begin{abstract}
In this paper we introduce the graph $\Gamma_{sc}(G)$ associated with  a group $G$, called the solvable conjugacy class graph (abbreviated as SCC-graph), whose  vertices are the nontrivial conjugacy classes of $G$ and two distinct  conjugacy classes $C, D$ are adjacent if there exist $x \in C$ and $y \in D$ such that $\langle x, y\rangle$ is solvable.

We discuss the connectivity, girth, clique number, and several other properties of the SCC-graph. One of our results asserts that there are only finitely many finite groups whose SCC-graph has given clique number~$d$, and we find explicitly the list of such groups with $d=2$. We pose some problems on the relation of the SCC-graph to the solvabale graph and to the NCC-graph, which we cannot solve.
\end{abstract}

\subjclass[2010]{05C25, 20E45, 20F16}
\keywords{Graph, conjugacy class, non-solvable group, clique number}

\maketitle

\section{Introduction} \label{S:intro}
Various graphs have been defined on finite groups, using various properties of groups, and studied to understand the interplay between groups and graphs.    Commutativity is one  of such graph defining properties on groups. The first graph arising from commutativity is the commuting graph \cite{BF1955} which is the complement of the non-commuting graph \cite{EN76}.
The commuting graph of a group $G$ is a graph whose vertices are the nontrivial elements of $G$ and two distinct vertices $x, y$ are adjacent if $\langle x, y \rangle$ is abelian. 

Another way of defining graphs on $G$ is given by considering the conjugacy classes as the vertex set and adjacency is defined using properties of conjugacy classes \cite{BHM90}. We write $x^G$ to denote the conjugacy class $\{gxg^{-1} : g \in G\}$ of $x \in G$.  Mixing the concepts of commutativity and conjugacy class, in 2009 Herzog et al. \cite{hlm} have introduced  commuting conjugacy class graph (abbreviated as CCC-graph) of  $G$ as a graph whose vertex set is the set of nontrivial conjugacy classes of $G$ and two distinct vertices $x^G$ and $y^G$ are adjacent if $\langle x^{\prime}, y^{\prime}\rangle$ is abelian for some $x^{\prime}\in x^G$ and $y^{\prime}\in y^G$.

Extending the notion of CCC-graph, in 2017 Mohammadian and Erfanian \cite{ME2017} introduced the nilpotent conjugacy class graph (abbreviated as NCC-graph) of a group. Its vertex set is the set of nontrivial conjugacy classes of $G$, and two distinct vertices $x^G$ and $y^G$ are adjacent if $\langle x^{\prime}, y^{\prime}\rangle$ is nilpotent for some $x^{\prime}\in x^G$ and $y^{\prime}\in y^G$. 
Note that the CCC-graph  is a spanning subgraph of the NCC-graph of $G$.

 In this paper, we further extend the notions of CCC-graph and NCC-graph and introduce the solvable conjugacy class graph (abbreviated as SCC-graph) of $G$. The SCC-graph of a group $G$ is a simple undirected graph,  denoted by $\Gamma_{sc}(G)$, with  vertex set  $\{x^G : 1 \ne x \in G\}$ and two distinct vertices $x^G$ and $y^G$ are adjacent if there exist two elements $x^{\prime}\in x^G$ and $y^{\prime}\in y^G$ such that $\langle x^{\prime}, y^{\prime}\rangle$ is solvable. It is clear that the NCC-graph  is a spanning subgraph of the SCC-graph of $G$.

The aim of  this paper is to discuss the connectivity, girth, clique and some properties of SCC-graph. 

For each of these conjugacy class graphs, we sometimes consider a variant where
the vertex set is $G$, rather than the set of conjugacy classes in $G$, with
the adjacency rules as described. So, if two conjugacy classes are equal or
adjacent in the conjugacy class version, then all pairs of vertices in those
classes are adjacent in the graph on $G$. This change leaves several 
properties, such as connectedness and diameter, unchanged. The reason for
doing this is that we want to compare the solvable conjugacy class graph with
the usual solvable graph (see \cite{bn-nath-2020}) on $G$, which
is easier if the vertex sets are the same. We will call the version of the
SCC-graph with vertex set $G$ the \emph{expanded SCC-graph} on $G$. The expanded CCC-graph and NCC-graph are defined analogously.

We write $V(\Gamma)$ to denote the vertex set of a graph $\Gamma$.  We write $u \sim v$ to denote that the vertices $u, v$ are adjacent. 

The distance between two vertices $u$ and $v$ of $\Gamma$ is denoted by $d(u, v)$.  Recall that the diameter of a graph is the maximum distance between its vertices. If $u = v$ then we write $d(u, v) = 0$.
The solvabilizer of $x$, denoted by $\Sol_G(x)$, is the set given by $\{y \in G :\langle x, y\rangle \text{ is solvable}\}$.

We write $\Sol(G) = \{x \in G :\langle x, y\rangle \text{ is solvable for all } y \in G\}$. Clearly,  $\Sol(G) = \bigcap_{ u\in G}\Sol_G(u)$ and $Z(G) \subseteq \Sol(G)$. Also, if $G$ is finite then $\Sol(G)$ is the solvable radical of $G$ (see \cite{gkps}).

We note that the second and third authors, together with Arunkumar and
Selvaganesh, have considered  graphs defined by
combining a graph on the group (such as the commuting graph) with an
equivalence class (such as conjugacy) in \cite{acns}. However, the SCC-graph is not
considered in that paper.

We begin with a simple observation. Let $a$ and $b$ be two elements of $G$
such that $a^G$ and $b^G$ are joined in the SCC-graph of $G$. This means that
there exist $a'\in a^G$ and $b'\in b^G$ such that $\langle a',b'\rangle$ is
solvable. Without loss of generality, we can assume that $a'=a$. For suppose
that $(a')^h=a$. Then $\langle a',b'\rangle^h=\langle a,(b')^h\rangle$ is
solvable, since it is a conjugate of (and hence isomorphic to)$\langle a',b'\rangle$.

\section{Properties of the SCC-Graph}

\begin{thm}
Let $G$ be a finite group. Then the SCC-graph of $G$ is complete if and only
if $G$ is solvable.
\label{t21}
\end{thm}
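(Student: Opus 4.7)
The easy direction $(\Leftarrow)$ is immediate: in a solvable $G$ every two-generated subgroup is solvable, so every pair of distinct nontrivial conjugacy classes is adjacent and $\Gamma_{sc}(G)$ is complete.

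For the nontrivial direction $(\Rightarrow)$, my first step is to reformulate completeness via the observation at the end of the introduction: $\Gamma_{sc}(G)$ is complete iff for every pair of nontrivial elements $x, y \in G$ there exist conjugates $x^g, y^h$ with $\langle x^g, y^h\rangle$ solvable; equivalently, for every $x \ne 1$ the solvabilizer $\Sol_G(x)$ meets every nontrivial conjugacy class of $G$. The cleanest second step is to invoke a conjugate-sensitive Thompson-style characterization of solvability, in the spirit of \cite{gkps}: a finite group is solvable iff for every pair of elements some conjugates generate a solvable subgroup. Applied to the reformulated hypothesis, this yields $G$ solvable at once.

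If a blackbox of exactly this form is not directly at hand, my backup plan is induction on $|G|$. SCC-completeness descends to any nontrivial quotient, since a solvable witness $\langle x', y'\rangle \le G$ projects to a solvable subgroup of $G/N$ containing representatives of the image classes; thus in a minimal non-solvable SCC-complete counterexample every proper quotient is solvable. Quotienting by $\Sol(G)$ reduces us to $\Sol(G) = 1$, and a socle analysis further reduces to the case that $G$ is non-abelian simple (or almost simple). The remaining task is then to exhibit in each such $G$ two nontrivial conjugacy classes no representatives of which generate a solvable subgroup. The prototype is $A_5$: the $3$-cycle class and either $5$-cycle class work, because the solvable subgroups of $A_5$ have orders in $\{1, 2, 3, 4, 5, 6, 10, 12\}$ --- none divisible by $15$ --- so no $3$-element and $5$-element cohabit a common solvable subgroup.

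The main obstacle is this final step: uniformly producing such a non-adjacent pair in every non-abelian finite simple group, which in principle requires Thompson's classification of minimal simple groups and analysis of their maximal solvable subgroups. A naive shortcut using only the version of the GKPS characterization explicitly cited, namely $\Sol(G) = \{x : \langle x, y\rangle \text{ solvable for all } y\}$, does not close the argument: in $A_5$ the solvabilizer of an involution already meets every nontrivial conjugacy class, yet the involution does not lie in $\Sol(A_5) = 1$, so ``$\Sol_G(x)$ meets every class'' is strictly weaker than $\Sol_G(x) = G$, and a genuinely stronger input (either the conjugate-sensitive characterization or the case analysis) seems unavoidable.
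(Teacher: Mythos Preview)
Your primary approach is exactly the paper's: the blackbox you want does exist and is Theorem~A of Dolfi--Guralnick--Herzog--Praeger \cite{dghp} (not \cite{gkps}, which as you correctly observe at the end is too weak on its own), asserting that a finite group is solvable iff every two conjugacy classes have representatives generating a solvable subgroup. With that citation in hand the argument is complete and your backup plan via reduction to simple groups is unnecessary.
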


\begin{proof}
If $G$ is solvable, $\langle x, y\rangle$ is also solvable for all $x, y\in G$. In particular, if  $a^G, b^G$ are two vertices of $\Gamma_{sc}(G)$ and $x \in a^G$, $y \in b^G$ then $\langle x, y\rangle$ is solvable.  Therefore, $a^G$ and $b^G$ are adjacent. Hence, $\Gamma_{sc}(G)$ is a complete graph. 

Conversely, suppose that $\Gamma_{sc}(G)$ is complete. Then, by the observation
at the end of the last section, for every $a,b\in G$, there is a conjugate
$b'$ of $b$ such that $\langle a,b'\rangle$ is solvable. By 
\cite[Theorem A]{dghp}, we conclude that $G$ is solvable.
\end{proof}

\begin{thm}\label{solv}
Let $G$ be a finite solvable group with complete expanded NCC-graph. Then $G$ is nilpotent.
\end{thm}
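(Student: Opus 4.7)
The plan is to argue by induction on $|G|$, so let $G$ be a minimal counterexample: solvable, expanded NCC-graph complete, but not nilpotent. Since nilpotence is preserved under quotients, the expanded NCC-graph of $G/N$ is also complete for every $N\lhd G$, so by minimality every proper quotient of $G$ is nilpotent. Two standard reductions apply. First, if $G$ had two distinct minimal normal subgroups $N_1,N_2$, then $G$ would embed in $G/N_1\times G/N_2$, a nilpotent group, forcing $G$ nilpotent; so $G$ has a unique minimal normal subgroup $N$, necessarily elementary abelian of some prime exponent $p$. Second, if $Z(G)\neq 1$ then $Z(G)$ contains the unique minimal normal subgroup $N$, and combining $N\leq Z(G)$ with $G/N$ nilpotent forces $G$ nilpotent; so $Z(G)=1$.

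Since $G/N$ is nilpotent, its Sylow $p$-subgroup is normal, and the full preimage $P$ is the (normal) Sylow $p$-subgroup of $G$. Pick a Hall $p'$-subgroup $K$ of $G$; then $G=PK$, $P\cap K=1$, $K$ is nilpotent (being isomorphic to the Hall $p'$-subgroup of the nilpotent quotient $G/N$), and the direct product decomposition of $G/N$ yields $[P,K]\leq N$. The key structural step is that $Z(P)$ is a nontrivial characteristic subgroup of the normal subgroup $P$, hence a nontrivial normal subgroup of $G$; by uniqueness of $N$ it must contain it, so $N\leq Z(P)$.

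Next I would apply the hypothesis. Take $1\neq x\in N$ and $1\neq y\in K$; they lie in distinct $G$-conjugacy classes (their orders are coprime), so by the observation preceding Section 2 there exists $y'=gyg^{-1}$ with $\langle x,y'\rangle$ nilpotent, and coprimality of the prime-power orders of $x$ and $y'$ forces $[x,y']=1$. Decomposing $g=p_0k_0$ with $p_0\in P$, $k_0\in K$ and setting $y_0=k_0yk_0^{-1}\in K$, the relation $[x,p_0y_0p_0^{-1}]=1$ becomes $[p_0^{-1}xp_0,y_0]=1$; since $x\in N\leq Z(P)$ we have $p_0^{-1}xp_0=x$, giving $y_0\in C_K(x)\cap y^K$. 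Thus $C_K(x)$ meets every $K$-conjugacy class. Since no finite group is the union of conjugates of a proper subgroup, $C_K(x)=K$; as $x\in N$ was arbitrary, $[K,N]=1$.

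Combined with $[P,K]\leq N$, this says the $p'$-group $K$ acts on the $p$-group $P$ trivially on both $N$ and $P/N$; the classical coprime stability group theorem then forces $K$ to act trivially on $P$. Thus $[P,K]=1$ and $G=P\times K$ is nilpotent, a contradiction. I expect the most delicate step to be the reduction $N\leq Z(P)$: it is exactly what collapses the conjugation calculation to a clean statement about $C_K(x)$. Without it one would only obtain that some $G$-conjugate of $x$ in $N$ is centralized by $y_0$, i.e., that $C_N(y)$ meets every $G$-orbit on $N$, which is not obviously enough, since $C_N(y)$ need not be a $G$-invariant subspace of $N$.
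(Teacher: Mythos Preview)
Your proof is correct. One small wording slip: when you say ``coprimality of the prime-power orders of $x$ and $y'$'', the element $y'\in K$ need not have prime-power order; what you actually use (and what suffices) is that $x$ is a $p$-element and $y'$ a $p'$-element, so in a nilpotent group they commute. This does not affect the argument.

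Your route and the paper's share the opening reduction (pass to a minimal counterexample with a unique minimal normal $p$-subgroup $N$ and $G/N$ nilpotent), but finish differently. The paper pushes the reduction one step further, arranging that $N$ itself is a Sylow $p$-subgroup and that $O_{p'}(G)=1$, and then exhibits an explicit non-edge: it picks a $p'$-element $x$ with $xN$ central in $G/N$ and some $y\in N$ not centralized by $x$, and uses that every $G$-conjugate of $x$ lies in $xN$ (and $N$ is abelian) to see that no conjugate of $x$ commutes with $y$. You instead keep the normal Sylow $p$-subgroup $P\geq N$ general and exploit the key inclusion $N\leq Z(P)$; this lets you convert the NCC-hypothesis into the statement that $C_K(x)$ meets every $K$-class, whence $C_K(x)=K$ by the ``no union of conjugates of a proper subgroup'' lemma, and then coprime stability on the series $1\leq N\leq P$ forces $[P,K]=1$. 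Your approach trades the extra reduction step for a slightly longer structural endgame; it has the advantage that the delicate point you flagged, $N\leq Z(P)$, replaces the paper's reduction to ``$N$ is Sylow'' (whose justification via $O_p(G/N)=1$ is terse), and it yields the pleasant conclusion that $G$ is in fact nilpotent rather than merely locating a missing edge.
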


\begin{proof}
Suppose $G$ is not nilpotent. Let $N$ be a minimal normal subgroup of $G$. Then $N$ is an elementary abelian $p$-group. If $N$ is not a Sylow $p$-subgroup, then $G/N$ is not nilpotent since $O_p(G/N)=1$. Clearly, the expanded NCC-graph of $G/N$ is still complete. Hence, we can replace $G$ by $G/N$. Continuing in this way, we end up with a minimal normal subgroup $N$ which is a Sylow $p$-subgroup and $G/N$ is nilpotent. We can further assume that $O_{p'}(G)=1$, so that $G/N$ acts faithfully on $N$. Let $x\in G$ be a $p'$-element such that $1\ne xN\in Z(G/N)$. Then there exists $y\in N$ such that $xy\ne yx$. It follows that $\langle x,y\rangle$ is not nilpotent. Now for every $g\in G$ we have $gxg^{-1}\equiv x\pmod{N}$. Hence, also $\langle gxg^{-1},y\rangle$ is not nilpotent. Therefore, $x$ and $y$ are not adjacent in the expanded NCC-graph. Contradiction. 
\end{proof}

For sake of discussion suppose that $G$ is a non-solvable group such that the expanded SCC-graph and NCC-graph coincide. Let $S\unlhd G$ be the solvable radical. If $xS,yS\in G/S$ are adjacent in the expanded SCC-graph of $G/S$, then there exists $g\in G$ such that $H:=\langle x,gyg^{-1}\rangle$ is solvable. By Theorem~\ref{solv}, there exists $h\in H$ such that $\langle x,hgyg^{-1}h^{-1}\rangle$ is nilpotent. Then also $\langle xS,hgy(hg)^{-1}S\rangle$ is nilpotent and $xS$ is adjacent to $yS$ in the expanded NCC-graph of $G/N$. Hence, we may assume that $S=1$. Now let $N$ be a minimal normal subgroup of $G$. Then $N=T_1\times\cdots\times T_n$ for isomorphic non-abelian simple groups $T_1,\ldots,T_n$. Let $T:=\{(t,\ldots,t)\}\le N$ be a diagonal subgroup. Then elements in $T$ are conjugate in $G$ if and only if they are conjugate in $\mathrm{Aut}(T)$. In order to derive a contradiction we may replace $G$ by $\mathrm{Aut}(T)$, i.e. we assume that $G$ is an almost simple group. 

Let $A_n\le G\le S_n$ and let $p$ be the largest prime $\le n$. By Bertrand's Postulate, $n<2p$. Let $x\in A_n$ be a $p$-cycle and $y\in N_{A_n}(\langle x\rangle)$ a disjoint product of a $(n-1)$-cycle and a transposition such that $y$ generates $\mathrm{Aut}(\langle x\rangle)$. Clearly $x$ and $y$ are adjacent in the expanded SCC-graph. Suppose that there exists $g\in G$ such that $\langle x,gyg^{-1}\rangle$ is nilpotent. Since $x$ and $y$ have coprime orders, it follows that $x$ commutes with $gyg^{-1}$. On the other hand, $y$ and $gyg^{-1}$ have the same cycle type. Hence, the cycles of $gyg^{-1}$ must be disjoint to the $p$-cyclic $x$. This is impossible since $p+(p-1)+2>2p> n$. 

At this point we record several questions about SCC-graphs, which we have not
been able to answer, together with some comments.

\begin{prob}
Given a finite group $G$, describe the set of vertices of the expanded SCC-graph
of $G$ which are joined to all others.
\end{prob}

We observe that, in the solvable graph, the set of dominant vertices is just
the solvable radical of $G$, by the result of \cite{gkps}. Hence the set of
dominant vertices in the expanded SCC-graph contains the solvable radical.
However, it can be larger. Consider the simple groups $\PSL(2,2^d)$,
with $d\ge2$. Each group has a unique conjugacy class of involutions, and every
element of the group is mapped to its inverse by conjugation by some involution.
So, for any element $a\in G$, there is an involution $b\in G$ so that
$\langle a,b\rangle$ is dihedral, and hence solvable. Thus the involutions are
dominant vertices. On the other hand, $G$ is non-abelian simple, so its
solvable radical is trivial. The sporadic Janko group $J_1$ also has this
property.

\begin{prob}
\begin{enumerate}
\item
For which finite groups $G$ is the expanded SCC-graph of $G$ equal to the
solvable graph of $G$?
\item 
For which finite non-solvable groups $G$ is the expanded SCC-graph of $G$ equal to the
expanded NCC-graph of $G$?
\end{enumerate}
\end{prob}

It is known that
\begin{itemize}
\item the expanded CCC-graph is equal to the commuting graph if and only if $G$
is a $2$-Engel group (that is, satisfies the commutator identity
$[x,y,y]=1$ for all $x,y\in G$ \cite[Theorem 2.2]{acns};
\item the solvable graph is equal to the nilpotent graph if and only if
$G$ is nilpotent~\cite[Proposition 11.1(b)]{gong}.
\end{itemize}
It may be that the answers to these two questions are ``$G$ is solvable'' and
``$G$ is nilpotent'' respectively.

We give here another open problem, which is loosely related to the above
problems.

\begin{prob}
For which finite graphs $\Gamma$ is there a finite group $G$ such that
$\Gamma$ is isomorphic to an induced subgraph of $\Gamma_{sc}(G)$?
\end{prob}

We note that every finite graph can be embedded in the solvable graph of some
finite group (see~\cite[p.~93]{gong}), but this construction does not descend
to the solvable conjugacy class graph.

\medskip

Next we turn to the questions of connectedness and diameter. The girth will
be discussed in the next section, but we begin with a simple observation.

\begin{prop} \label{aa}
Let $G$ be a  non-solvable group such that it has an element of order $pq$, where $p, q$ are primes.
If $p\neq q$    then $\girth(\Gamma_{sc}(G)) = 3$ and hence  $\Gamma_{sc}(G)$ is not a tree.
\end{prop}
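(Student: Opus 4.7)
The plan is to exhibit a triangle in $\Gamma_{sc}(G)$ explicitly, using only the cyclic subgroup generated by an element of order $pq$. Interestingly, non-solvability of $G$ is not needed for the girth computation itself (it is stated to make the conclusion nontrivial, since solvable groups have complete SCC-graphs).

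Let $x\in G$ have order $pq$ with $p\ne q$ primes. Set $a:=x^q$ and $b:=x^p$, so that $|a|=p$ and $|b|=q$. Since the three orders $p$, $q$, $pq$ are pairwise distinct and all greater than $1$, the elements $a$, $b$, $x$ are nontrivial and pairwise non-conjugate in $G$. Thus $a^G$, $b^G$, $x^G$ are three distinct vertices of $\Gamma_{sc}(G)$.

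Because $\gcd(p,q)=1$, we can write $1=sp+tq$ for some integers $s,t$, giving $x=(x^p)^s(x^q)^t=b^sa^t\in\langle a,b\rangle$. Hence $\langle a,b\rangle=\langle x\rangle$. In particular $\langle a,b\rangle$, $\langle a,x\rangle$ and $\langle b,x\rangle$ are all subgroups of the cyclic (hence solvable) group $\langle x\rangle$, and so are themselves solvable. Therefore $a^G,b^G,x^G$ are pairwise adjacent in $\Gamma_{sc}(G)$, forming a triangle.

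This forces $\girth(\Gamma_{sc}(G))\le 3$; since girth is at least $3$ by definition, we conclude $\girth(\Gamma_{sc}(G))=3$, and in particular $\Gamma_{sc}(G)$ contains a cycle and is not a tree. There is no real obstacle here: the only slightly delicate point is verifying that the three candidate vertices are distinct, which is handled by noting that the three orders $p$, $q$, $pq$ are distinct, so conjugate elements (which share an order) cannot merge the three classes.
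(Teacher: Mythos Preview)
Your proof is correct and follows essentially the same approach as the paper: both exhibit the triangle on the three conjugacy classes of $x$, $x^p$, and $x^q$ inside the cyclic group $\langle x\rangle$, using that these elements have pairwise distinct orders (hence distinct classes) and that any two of them generate a cyclic (in particular solvable) subgroup. Your write-up is in fact a bit more careful than the paper's, spelling out the distinctness of the vertices and the equality $\langle x^p,x^q\rangle=\langle x\rangle$ via B\'ezout.
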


\begin{proof}
Let $a \in G$ be an element of  order $pq$. 
If $p\neq q$  then  $o(a^q) = p$ and  $o(a^p) = q$. Also, $\langle a, a^q\rangle, \langle a^q, a^p\rangle$ and $\langle a^p, a\rangle$ are abelian groups. Since $a^G$, $(a^q)^G$ and $(a^p)^G$ are distinct, we have the following triangle
\[
a \sim a^q \sim a^p \sim a
\]
in $\Gamma_{sc}(G)$. Therefore, $\girth(\Gamma_{sc}(G)) = 3$ and hence  $\Gamma_{sc}(G)$ is not a tree.
\end{proof}

\begin{prop}
Let  $x\in G\setminus \{1\}$ and $a, b\in \Sol_G(x)\setminus \{1\}$. Then $a^G$ and $b^G$ are connected and $d(a^G, b^G)\leq 2$.
In particular, if $\Sol(G)\neq \{1\}$ then $\Gamma_{sc}(G)$ is connected and $\diam(\Gamma_{sc}(G))\leq 2$.
\end{prop}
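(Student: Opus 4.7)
The plan is to use $x^G$ as an intermediate hub and reduce the statement to a short case analysis. By definition of $\Sol_G(x)$, the hypothesis $a,b\in\Sol_G(x)\setminus\{1\}$ says that both $\langle x,a\rangle$ and $\langle x,b\rangle$ are solvable; by the definition of the SCC-graph this immediately produces the edges $a^G\sim x^G$ and $x^G\sim b^G$ whenever the corresponding endpoints are distinct vertices. A walk of length at most two from $a^G$ to $b^G$ is therefore visible, but one has to be a little careful about the fact that the SCC-graph is loopless, so I cannot invoke an ``edge'' when two of the three classes collapse.

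To handle this, I would organise the argument by cases on which of the three classes $a^G$, $b^G$, $x^G$ coincide. If $a^G=b^G$, then $d(a^G,b^G)=0$. If $a^G=x^G$ (respectively $b^G=x^G$) but $a^G\neq b^G$, the solvability of $\langle x,b\rangle$ (respectively $\langle x,a\rangle$) gives the edge $a^G\sim b^G$ directly, so $d(a^G,b^G)=1$. Finally, if $a^G$, $b^G$, $x^G$ are pairwise distinct, then concatenating the two edges through $x^G$ gives a path of length $2$.

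For the ``in particular'' clause I would pick any nontrivial $x\in\Sol(G)$. Since $\Sol(G)=\bigcap_{u\in G}\Sol_G(u)$, for every $u\in G$ the subgroup $\langle x,u\rangle$ is solvable, so $\Sol_G(x)=G$. Hence for any two nontrivial conjugacy classes $a^G$ and $b^G$ the elements $a$ and $b$ lie in $\Sol_G(x)\setminus\{1\}$, and applying the first half yields $d(a^G,b^G)\leq 2$. This shows that $\Gamma_{sc}(G)$ is connected and $\diam(\Gamma_{sc}(G))\leq 2$.

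There is no serious obstacle here: the whole statement is a direct unwinding of the definitions of $\Sol_G(x)$, $\Sol(G)$, and adjacency in $\Gamma_{sc}(G)$, with the only mild subtlety being the loop bookkeeping that the case split takes care of.
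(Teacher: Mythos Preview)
Your proof is correct and follows essentially the same approach as the paper: use $x^G$ as a hub via the edges coming from the solvability of $\langle x,a\rangle$ and $\langle x,b\rangle$, and for the second part pick a nontrivial element of $\Sol(G)$ to play the role of $x$. The paper handles the ``collapsed classes'' cases implicitly via the convention $d(u,u)=0$ rather than by an explicit case split, but the substance is identical.
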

\begin{proof}
Since $a, b\in \Sol_G(x)\setminus \{1\}$, $\langle a, x\rangle$ and $\langle x, b\rangle$ are solvable. Therefore,  $d(a^G, x^G)\leq 1$ and $d(x^G, b^G)\leq 1$. Hence,  the result follows.

If $\Sol(G)\neq \{1\}$ then there exists an element $z \in G$ such that   $z \neq 1$ and $z \in \Sol(G)$. Therefore, $z\in \Sol_G(w)$ for all $w \in G \setminus \{1\}$. Let  $u^G$ and $v^G$ be any two vertices of $\Gamma_{sc}(G)$. Then $u, v \in \Sol_G(z)\setminus \{1\}$. Therefore, by the first part it follows that  $d(u^G, v^G)\leq 2$. Hence, $\diam(\Gamma_{sc}(G))\leq 2$.
\end{proof}

\begin{rem}
For any two distinct vertices $x^G, y^G\in V(\Gamma_{sc}(G))$, $x^G \sim y^G$ if and only if $\Sol_G(gxg^{-1}) \cap y^G \neq \emptyset$ for all $g \in G$. Also, $x^G$ is an isolated vertex if and only if $\Sol_G(gxg^{-1})\subseteq x^G \cup \{1\}$ for all $g\in G$.
\end{rem}

\begin{thm}
If $G, H$ are arbitrary nontrivial groups then the graph $\Gamma_{sc}(G \times H)$ is connected and $\diam(\Gamma_{sc}(G \times H))\leq 3$. In particular, $\Gamma_{sc}(G \times G)$ is connected and $\diam(\Gamma_{sc}(G \times G))\leq 3$. Further, $\diam(\Gamma_{sc}(G \times G)) = 3$  if and only if either $\Gamma_{sc}(G)$ is disconnected or $\Gamma_{sc}(G)$ is connected with  $\diam(\Gamma_{sc}(G)) \geq 3$.
\end{thm}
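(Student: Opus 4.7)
The plan is to first establish the general upper bound $\diam(\Gamma_{sc}(G\times H))\le 3$ by routing every pair of vertices through certain ``axis'' vertices, and then analyse when the diameter drops below $3$ in the $H=G$ case. The key observation is that for any $a\in G\setminus\{1\}$ and $b\in H\setminus\{1\}$, the elements $(a,1)$ and $(1,b)$ commute in $G\times H$, so the classes $(a,1)^{G\times H}$ (call these \emph{$G$-type}) and $(1,b)^{G\times H}$ (\emph{$H$-type}) are adjacent in $\Gamma_{sc}(G\times H)$. Moreover, any $(x,y)$ with $x\ne 1\ne y$ commutes with both $(x,1)$ and $(1,y)$, so $(x,y)^{G\times H}$ is adjacent to a $G$-type and to an $H$-type vertex. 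Hence every vertex $u$ has distance at most $1$ from some $G$-type vertex $\alpha_u$, every vertex $v$ has distance at most $1$ from some $H$-type vertex $\beta_v$, and $\alpha_u\sim\beta_v$; the walk $u\to\alpha_u\to\beta_v\to v$ has length at most $3$.

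For the $G\times G$ characterisation, the central fact is that a subgroup of $G\times G$ is solvable iff both of its coordinate projections are solvable, combined with the observation that conjugation in $G\times G$ by $(g,h)$ acts independently as conjugation by $g$ on the first factor and by $h$ on the second. Consequently $(x_1,y_1)^{G\times G}$ and $(x_2,y_2)^{G\times G}$ are adjacent iff they are distinct, some conjugates of $x_1,x_2$ generate a solvable subgroup of $G$, and some conjugates of $y_1,y_2$ generate a solvable subgroup of $G$. Writing $a\approx b$ for this last relation on $G$ (which holds trivially whenever $a=1$ or $b=1$, and otherwise coincides with ``equal or adjacent in $\Gamma_{sc}(G)$''), adjacency in $\Gamma_{sc}(G\times G)$ reads ``distinct, $x_1\approx x_2$, and $y_1\approx y_2$''.

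Because $\diam(\Gamma_{sc}(G\times G))\le 3$ is already available, the iff statement is equivalent to: $\diam(\Gamma_{sc}(G\times G))\le 2$ if and only if $\Gamma_{sc}(G)$ is connected with $\diam\le 2$. For the reverse direction, given distinct non-adjacent vertices $u=(\tilde x_1,\tilde y_1)$ and $v=(\tilde x_2,\tilde y_2)$, I will choose nontrivial classes $\tilde z,\tilde w$ with $x_1\approx z\approx x_2$ and $y_1\approx w\approx y_2$: when both $x_1,x_2$ are nontrivial, the hypothesis on $\Gamma_{sc}(G)$ provides a path of length at most $2$ and I pick $\tilde z$ on it; when exactly one of $x_1,x_2$ is trivial I take $\tilde z$ equal to the nontrivial one; when both are trivial I take any nontrivial class. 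Then $(\tilde z,\tilde w)\ne(\tilde 1,\tilde 1)$ is a common neighbour of $u$ and $v$. For the forward direction, I will test the hypothesis on $u=(\tilde x,\tilde y)$ and $v=(\tilde y,\tilde x)$ with $\tilde x\ne\tilde y$ in $\Gamma_{sc}(G)$: if $u\sim v$ then $x\approx y$, forcing $\tilde x\sim\tilde y$ in $\Gamma_{sc}(G)$, and otherwise a common neighbour $(\tilde z,\tilde w)$ yields $x\approx z\approx y$ and $y\approx w\approx x$ where at least one of $z,w$ is nontrivial, producing a path of length at most $2$ between $\tilde x$ and $\tilde y$ in $\Gamma_{sc}(G)$.

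The main obstacle is the careful bookkeeping around the trivial class $\tilde 1$: the projection of a non-trivial $(G\times G)$-class to $G$ may still be trivial, so the case analysis must split according to which coordinates vanish, and in the $(\Leftarrow)$ direction one must check that the chosen intermediate $(\tilde z,\tilde w)$ lies in $\Gamma_{sc}(G\times G)$; this is precisely where the hypothesis $\diam(\Gamma_{sc}(G))\le 2$ (rather than mere connectedness) is used, since it guarantees that each coordinate admits a nontrivial intermediate class.
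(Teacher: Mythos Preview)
Your argument is correct. For the bound $\diam(\Gamma_{sc}(G\times H))\le 3$ and for the $(\Leftarrow)$ direction of the equivalence, your approach coincides with the paper's: both route through ``axis'' vertices, and both build a length-$\le 2$ path in $\Gamma_{sc}(G\times G)$ via an intermediate vertex whose coordinates lie on short $\Gamma_{sc}(G)$-paths between the given coordinates. The paper uses an intermediate of the special form $(a,1_G)$ after a ``without loss'' reduction to $x,u\ne 1_G$, while your two-coordinate intermediate $(\tilde z,\tilde w)$ with explicit case analysis on trivial coordinates is a tidier execution of the same idea.

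The genuine difference is in the $(\Rightarrow)$ direction. The paper tests the hypothesis $\diam(\Gamma_{sc}(G\times G))\le 2$ on the pair $(u,1_G)^{G\times G}$, $(v,1_G)^{G\times G}$ and, from a common neighbour $(a,b)^{G\times G}$, reads off a path $u^G\sim a^G\sim v^G$ in $\Gamma_{sc}(G)$. That step is unjustified when $a=1_G$; and since $(u,1_G)\sim(1_G,b)\sim(v,1_G)$ holds for \emph{every} nontrivial $b$, the pair $(u,1_G),(v,1_G)$ is \emph{always} at distance $\le 2$ in $\Gamma_{sc}(G\times G)$ and therefore carries no information about $u$ and $v$. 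Your choice of the ``swapped'' test pair $(x,y)^{G\times G}$, $(y,x)^{G\times G}$ sidesteps this: a common neighbour $(z,w)^{G\times G}$ forces $x\approx z\approx y$ and $x\approx w\approx y$ simultaneously, and since at least one of $z,w$ is nontrivial you obtain an honest length-$\le 2$ path from $\tilde x$ to $\tilde y$ in $\Gamma_{sc}(G)$. So in this direction your approach is not merely different from the paper's but in fact closes a gap in its argument.
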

\begin{proof}
Let $(x, y)$ and $(u, v)$ be two nontrivial elements of $G \times H$. Without any loss we may assume that  $x \neq 1_G$ and $v \neq 1_H$, where $1_G$ and $1_H$ are identity elements of $G$ and $H$ respectively, then
\begin{equation*}
(x,y)^{G\times H} \sim (x,1_H)^{G\times H} \sim (1_G,v)^{G\times H} \sim (u,v)^{G\times H}.
\end{equation*}
This shows that $\Gamma_{sc}(G \times H)$ is connected and $\diam(\Gamma_{sc}(G \times H))\leq 3$. Putting $H = G$, it follows that $\Gamma_{sc}(G \times G)$ is connected and $\diam(\Gamma_{sc}(G \times G))\leq 3$.

Let $\diam(\Gamma_{sc}(G \times G)) = 3$.  Suppose that $\Gamma_{sc}(G)$ is connected and  $\diam(\Gamma_{sc}(G))\leq 2$ (on the contrary).
Let $(x, y), (u, v)$ be two vertices  in $\Gamma_{sc}(G \times G)$. Without any loss we may assume that $x, u\neq 1_G$. Since $\Gamma_{sc}(G)$ is connected and  $\diam(\Gamma_{sc}(G))\leq 2$,  there exist $a \in G \setminus \{1_G\}$ such that $x^G \sim a^G \sim u^G$. Therefore,  $\langle x^f, a^g\rangle$ and $\langle a^h, u^w\rangle$ are solvable for some  $f, g, h, w \in G$. We have $\langle (x, y)^{(f, c)}, (a, 1_G)^{(g, d)}\rangle = \langle x^f, a^g\rangle \times \langle y^c \rangle$, where $c, d \in G$. Since $\langle x^f, a^g\rangle$ and $\langle y^c \rangle$ are solvable, $(x, y)^{G\times G}\sim (a, 1_G)^{G\times G}$. Similarly,  $(u, v)^{G\times G} \sim (a, 1_G)^{G\times G}$. Thus we get the following path  
\begin{equation*}
 (x, y)^{G\times G}\sim (a, 1_G)^{G\times G}\sim (u, v)^{G\times G}.
\end{equation*}
Therefore, $\diam(\Gamma_s(G \times G)) \leq 2$, which is a contradiction. Hence,  $\Gamma_{sc}(G)$ is disconnected or $\Gamma_{sc}(G)$ is connected with $\diam(\Gamma_{sc}(G)) \geq 3$.

Suppose that either $\Gamma_{sc}(G)$ is disconnected or it is connected with $\diam(\Gamma_s(G)) \geq 3$. 
Then there exist two distinct elements $x, y \in G \setminus \{1_G\}$ such that either $x^G, y^G$ are not connected or $d(x^G, y^G) \geq 3$. We are to show that $\diam(\Gamma_{sc}(G \times G)) = 3$. Suppose that $\diam(\Gamma_{sc}(G \times G)) \leq 2$. Consider the following two cases. 

\noindent \textbf{Case 1.}  $\Gamma_{sc}(G)$ is disconnected.

Let $u^G$ and $v^G$ be any two distinct vertices in $\Gamma_{sc}(G)$. Then   $d((u, 1_G)^{G\times G}, (v, 1_G)^{G\times G}) = 1$ or $2$. If $d((u, 1_G)^{G\times G}, (v, 1_G)^{G\times G}) = 1$ then $(u, 1_G)^{G\times G} \sim (v, 1_G)^{G\times G}$. Therefore, $\langle u^f, v^w\rangle$ is solvable for some  $f, w \in G$. Therefore,   $u^G\sim  v^G$ and so $d(u^G, v^G) = 1$; a contradiction. 
If $d((u, 1_G)^{G\times G}, (v, 1_G)^{G\times G}) = 2$ then there exists a non-identity element   $(a, b)\in G \times G$  such that
\begin{equation*}
(u, 1_G)^{G\times G}\sim (a,b)^{G\times G}\sim (v, 1_G)^{G\times G}.
\end{equation*}
It follows that $\langle u^f, a^g\rangle$ and $\langle a^h, v^w\rangle$ are solvable for some  $f, g, h, w \in G$ and so
 \begin{equation*}
 u^G\sim a^G \sim v^G.
 \end{equation*} 
Thus  $u^G$, $v^G$ are connected and $d(u^G, v^G) \leq 2$, a contradiction. 

\noindent \textbf{Case 2.}  $\Gamma_{sc}(G)$ is connected with $\diam(\Gamma_s(G)) \geq 3$.

Proceeding as in Case 1, we get $d(u^G, v^G) \leq 2$ for any two distinct vertices $u^G$ and $v^G$ in $\Gamma_{sc}(G)$. Therefore, $\diam(\Gamma_s(G)) = 2$; a contradiction.

Thus, from  Case 1 and Case 2, we get $\diam(\Gamma_{sc}(G \times G)) \geq 3$. Hence, 
 $\Gamma_{sc}(G \times G) = 3$. 
\end{proof}

A dominating set of a graph $\Gamma$ is a subset $S$ of $V(\Gamma)$ such that every vertex  in $V(\Gamma) \setminus S$ is adjacent to at least one vertex in $S$.  The domination number of  $\Gamma$, denoted by $\lambda(\Gamma)$, is the  minimum cardinality of  dominating sets of  $\Gamma$.

\begin{prop}
Let $G$ be a  non-solvable group. Then  $\lambda(\Gamma_{sc}(G))=1$ if $|\Sol(G)|\neq 1$.
\end{prop}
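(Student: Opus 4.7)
The plan is to exhibit an explicit one-element dominating set consisting of the conjugacy class of any nontrivial element of $\Sol(G)$. Since $1 \in \Sol(G)$ always holds and $|\Sol(G)| \neq 1$ by hypothesis, we may pick some $z \in \Sol(G) \setminus \{1\}$. Because $z \neq 1$, the conjugacy class $z^G$ is a genuine vertex of $\Gamma_{sc}(G)$.

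Next I would verify that $\{z^G\}$ dominates all other vertices. Let $y^G$ be any vertex of $\Gamma_{sc}(G)$ different from $z^G$, and pick any representative $y \in y^G$ (so in particular $y \neq 1$). By the very definition of $\Sol(G)$, the subgroup $\langle z, y \rangle$ is solvable. By the adjacency rule of the SCC-graph, this immediately gives $z^G \sim y^G$. Hence every vertex of $\Gamma_{sc}(G)$ other than $z^G$ is adjacent to $z^G$, so $\{z^G\}$ is a dominating set and $\lambda(\Gamma_{sc}(G)) \leq 1$.

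Finally, since $G$ is non-solvable it is in particular nontrivial, so $\Gamma_{sc}(G)$ has at least one vertex, and therefore $\lambda(\Gamma_{sc}(G)) \geq 1$. Combining the two inequalities yields $\lambda(\Gamma_{sc}(G)) = 1$.

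There is essentially no obstacle here: the argument is a direct unwinding of the definition of $\Sol(G)$ together with the adjacency rule for the SCC-graph. The only very minor point to be careful about is that the chosen $z$ is nontrivial (so that $z^G$ lies in the vertex set) and that we do not need to worry about choosing representatives cleverly, since $z$ itself sits in $\Sol_G(y)$ for every $y \in G$.
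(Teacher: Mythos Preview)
Your argument is correct and follows exactly the same approach as the paper: pick a nontrivial $z\in\Sol(G)$, observe that $\langle z,y\rangle$ is solvable for every $y$, and conclude that $\{z^G\}$ is a dominating set. The only (harmless) addition you make is the explicit remark that $\lambda(\Gamma_{sc}(G))\ge 1$ because the vertex set is nonempty, which the paper leaves implicit.
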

\begin{proof}
Let $x$ be a nontrivial element in $\Sol(G)$. Then $x^G\in V(\Gamma_{sc}(G))$. Let $y^G \in V(\Gamma_{sc}(G)) \setminus \{x^G\}$ be an arbitrary vertex. Then $\langle x, y\rangle$ is solvable. Therefore,  $x^G$ and $y^G$ are adjacent. Hence, $\{x^G\}$ is a dominating set of $\Gamma_{sc}(G)$ and so $\lambda(\Gamma_{sc}(G))=1$.
\end{proof}

\section{Clique number}

The \emph{clique number} $\omega(\Gamma)$ of a graph $\Gamma$ is the number
of vertices in the largest complete subgraph of $\Gamma$. In this section
we investigate the clique number of the SCC-graphs of finite groups. The
main theorem of this section is that there are only finitely many finite groups
whose SCC-graph has a given clique number.

We begin with a theorem of Landau~\cite{Landau}.
Let $k(G)$ denote the number of conjugacy classes of the group $G$.

\begin{prop}
For any positive integer $m$, there are only finitely many finite groups
which have $k(G)=m$.
\label{p:cc}
\end{prop}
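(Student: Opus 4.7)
The plan is to use the class equation to bound $|G|$ in terms of $m$, and then invoke the obvious fact that there are only finitely many groups of any given order. Write $C_1,\dots,C_m$ for the conjugacy classes of $G$ and $c_i=|C_G(x_i)|$ for the centralizer order of a representative $x_i\in C_i$. Since $|C_i|=|G|/c_i$, dividing the class equation $|G|=\sum_{i=1}^m |G|/c_i$ by $|G|$ yields
\[
1=\sum_{i=1}^m \frac{1}{c_i}.
\]
Note that one of the classes is $\{1\}$, whose centralizer is all of $G$; thus $|G|$ appears among $c_1,\dots,c_m$. It therefore suffices to bound the $c_i$.

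The main step is to show that for each fixed $m$, the equation $1=\sum_{i=1}^m 1/c_i$ has only finitely many solutions in positive integers $c_1\le c_2\le\cdots\le c_m$. I would argue by induction on $m$. For $m=1$ the only solution is $c_1=1$. For the induction step, observe that the smallest term $1/c_1$ must be at least $1/m$, so $c_1\le m$, giving finitely many choices for $c_1$. Having fixed $c_1$, one obtains $1-1/c_1=\sum_{i=2}^m 1/c_i$, i.e.\ a positive rational $p/q$ written as a sum of $m-1$ unit fractions with denominators $\ge c_1$. Iterating (or applying induction to $q/p\cdot\sum 1/c_i=1$ after clearing denominators), each subsequent $c_i$ is bounded by a function of $c_1,\dots,c_{i-1}$ and $m$.

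Consequently every $c_i$ is bounded by some constant $N(m)$ depending only on $m$, and in particular $|G|\le N(m)$. Since there are only finitely many (isomorphism types of) finite groups of order at most $N(m)$, there are only finitely many finite groups with $k(G)=m$.

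The only real obstacle is the Egyptian fraction bound; everything else is immediate from the class equation. This is in fact Landau's original 1903 argument and nothing more delicate is needed.
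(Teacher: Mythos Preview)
Your proof is correct and follows essentially the same route as the paper: derive the Egyptian-fraction identity $\sum_{i=1}^m 1/c_i=1$ from the class equation, note that $|G|$ occurs among the $c_i$, and use the finiteness of representations of~$1$ as a sum of $m$ unit fractions to bound $|G|$. The paper is terser, simply calling the Egyptian-fraction finiteness ``folklore'' and an easy exercise, whereas you sketch the standard inductive bound; the content is the same.
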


For let $x_1,\ldots,x_m$ be conjugacy class representatives, and let
$n_i=|C_G(x_i)|$ for $i=1,\ldots,m$. Then $|x_i^G|=|G|/n_i$; so
\[\sum_{i=1}^m\frac{1}{n_i}=1.\]
Now there are only finitely many expressions of $1$ as a sum of $m$ fractions
with unit numerator (this is ``folklore'', but is not a difficult exercise).
Moreover, the largest value of $n_i$ is $|C_G(1)|=|G|$.

Now we can deal with solvable groups.

\begin{thm}
There are only finitely many solvable groups $G$ for which $\Gamma_{sc}(G)$
has given clique number $d$.
\label{t:sol}
\end{thm}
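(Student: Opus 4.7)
The plan is to combine Theorem~\ref{t21} with Landau's theorem (Proposition~\ref{p:cc}). By Theorem~\ref{t21}, if $G$ is solvable then $\Gamma_{sc}(G)$ is a complete graph, so its clique number equals the number of vertices, namely $k(G)-1$ (the number of nontrivial conjugacy classes of $G$). Hence the hypothesis $\omega(\Gamma_{sc}(G))=d$ forces $k(G)=d+1$.

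Now I would simply invoke Proposition~\ref{p:cc} with $m=d+1$: there are only finitely many finite groups with exactly $d+1$ conjugacy classes, and in particular only finitely many solvable ones. This gives the desired finiteness.

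There is essentially no obstacle: the argument is a two-line reduction, and the genuine content is already in Landau's bound, which is recorded just before the theorem. The only thing worth remarking is the translation between the number of vertices of $\Gamma_{sc}(G)$ (which excludes the class of the identity) and the invariant $k(G)$ that appears in Landau's theorem, which accounts for the shift by one.
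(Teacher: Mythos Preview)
Your proof is correct and follows exactly the same approach as the paper: use Theorem~\ref{t21} to deduce that $\Gamma_{sc}(G)$ is complete when $G$ is solvable, so $\omega(\Gamma_{sc}(G))=k(G)-1$, and then apply Proposition~\ref{p:cc}. The paper's proof is the same two-line reduction.
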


\begin{proof}
By Theorem~\ref{t21}, if $G$ is solvable then $\Gamma_{sc}(G)$ is
complete, so its clique number is $k(G)-1$ (since the identity is
omitted from the graph); now Proposition~\ref{p:cc} finishes the result.
\end{proof}

We now give a result which will be used several times.

\begin{thm}\label{cl}
Let $G$ be a finite group. If $G$ has an element of order $n = \Pi_{i=1}^mp_i^{k_i}$, where $p_i$'s are distinct primes. Then $\Gamma_{sc}(G)$ has a clique of size \, $\Pi_{i=1}^m(k_i+1)-1$.
\end{thm}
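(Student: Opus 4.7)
The plan is to exhibit a clique in $\Gamma_{sc}(G)$ whose vertices are conjugacy classes of powers of a single element of order $n$. Let $a \in G$ have order $n = \prod_{i=1}^m p_i^{k_i}$, and recall that the number of positive divisors of $n$ is $\tau(n) = \prod_{i=1}^m (k_i+1)$.

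For each positive divisor $d$ of $n$, set $a_d := a^{n/d}$, so that $a_d$ has order exactly $d$. I would enumerate the divisors as $1 = d_1 < d_2 < \cdots < d_{\tau(n)} = n$ and discard $d_1 = 1$ (which gives the identity), leaving the $\tau(n) - 1$ non-identity elements $a_{d_2},\ldots,a_{d_{\tau(n)}}$ of $\langle a\rangle$.

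I would then verify the two clique conditions. First, the conjugacy classes $a_{d_j}^G$ are pairwise distinct: conjugate elements have the same order, but $a_{d_i}$ and $a_{d_j}$ have orders $d_i \ne d_j$, so they cannot be conjugate. Second, any two of these vertices are adjacent in $\Gamma_{sc}(G)$: the elements $a_{d_i}, a_{d_j}$ both lie in the cyclic (hence abelian, hence solvable) group $\langle a\rangle$, so $\langle a_{d_i}, a_{d_j}\rangle \le \langle a\rangle$ is solvable, which by the definition of $\Gamma_{sc}(G)$ means $a_{d_i}^G \sim a_{d_j}^G$.

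Combining these two observations, $\{a_{d_2}^G,\ldots,a_{d_{\tau(n)}}^G\}$ is a clique in $\Gamma_{sc}(G)$ of size $\tau(n) - 1 = \prod_{i=1}^m (k_i+1) - 1$, as required. There is no real obstacle here; the only point that needs care is the distinctness of the conjugacy classes, which is immediate from the fact that the orders $d_2,\ldots,d_{\tau(n)}$ are pairwise distinct.
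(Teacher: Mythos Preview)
Your proof is correct and follows essentially the same approach as the paper: both take the powers of a single element of order $n$ indexed by the nontrivial divisors of $n$, observe that these generate a cyclic (hence solvable) group so any two are adjacent, and count $\tau(n)-1$ vertices. Your version is in fact slightly more careful than the paper's, since you explicitly verify that the conjugacy classes are pairwise distinct via their distinct orders, a point the paper leaves implicit.
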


\begin{proof}
Let $x\in G$ be an element of order $n$. Then $(x^r)^G \sim (x^s)^G$ for all proper divisors  $r, s$ of $n$. Since  total number of proper divisors of $n = \Pi_{i=1}^mp_i^{k_i}$ is $\Pi_{i=1}^m(k_i+1) - 1$, we get a clique in $\Gamma_{sc}(G)$ of size $\Pi_{i=1}^m(k_i+1)-1$.
\end{proof}

Next, we prove the main result for $d=2$, in a stronger form.

\begin{thm}
With the exception of the cyclic groups of orders $1$, $2$ and $3$ and the
symmetric group of degree~$3$, every finite group $G$ has the property that
$\Gamma_{sc}(G)$ contains a triangle (that is, has girth~$3$).
\end{thm}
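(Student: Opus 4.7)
The plan is to split on whether $G$ is solvable. If $G$ is solvable, Theorem~\ref{t21} gives that $\Gamma_{sc}(G)$ is the complete graph on the $k(G)-1$ nontrivial conjugacy classes of $G$, so a triangle exists if and only if $k(G) \ge 4$. A short counting argument in the spirit of Proposition~\ref{p:cc} shows that the only finite groups with $k(G) \le 3$ are $\{1\}$, $C_2$, $C_3$, and $S_3$: the solutions of $\sum_{i=1}^{k(G)} 1/n_i = 1$ with each $n_i$ dividing $|G|$ and one $n_i = |G|$ are extremely restrictive, forcing these four groups. Hence every other solvable $G$ has at least three nontrivial classes and contains a triangle.

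For non-solvable $G$ I would produce a triangle in three successive steps. First, if $G$ contains an element of order $pq$ for two distinct primes $p$ and $q$, Proposition~\ref{aa} immediately gives $\girth(\Gamma_{sc}(G)) = 3$. Otherwise $G$ is a CP-group, meaning every element has prime-power order. Second, if $G$ contains an element of order $p^k$ with $k \ge 3$, Theorem~\ref{cl} produces a clique of size $k \ge 3$. The residual case is a non-solvable CP-group whose element orders all lie in $\{1\} \cup \{p, p^2 : p \text{ prime}\}$. Since the CP property is inherited by quotients, every non-abelian composition factor of such a $G$ is a simple CP-group, and by Higman's classification these fall in a short explicit list, including $A_5$, $A_6$, $\PSL(2,7)$, $\PSL(2,8)$, $\PSL(3,4)$, and certain Suzuki groups $\Sz(2^{2n+1})$.

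For each simple group $S$ in this list I would exhibit a solvable subgroup $H \le S$ whose nontrivial elements meet at least three distinct $S$-conjugacy classes; $H$ being solvable, any two elements of $H$ generate a solvable subgroup, so those three classes are pairwise adjacent in $\Gamma_{sc}(S)$. Typical choices are $N_{A_5}(\langle(12345)\rangle) \cong D_{10}$ for $A_5$ (the involution class together with the two classes of $5$-cycles); a maximal $S_4$ for $\PSL(2,7)$ or $A_6$ (classes of orders $2$, $3$, and $4$); and a Sylow $2$-subgroup for $\Sz(q)$ (two classes of order-$4$ elements together with the single class of involutions). To lift the triangle from $S$ to $G$, one uses that if $N/R$ is a section of $G$ with $R$ solvable, then a solvable subgroup of $N/R$ pulls back to a solvable subgroup of $N \le G$, since an extension of a solvable group by a solvable group is solvable.

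The main obstacle is this residual CP-group case: Higman's list is finite but heterogeneous, so some case-by-case verification is unavoidable; in addition, one must check in each case that the three $S$-classes remain distinct after passing back up to $G$, which may require selecting the classes in an $\mathrm{Aut}(S)$-invariant way when $G$ induces outer automorphisms on $S$.
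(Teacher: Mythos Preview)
Your proposal is correct and follows essentially the same strategy as the paper: dispose of the solvable case via Theorem~\ref{t21} and the classification of groups with $k(G)\le 3$, use an element of order $pq$ (Proposition~\ref{aa}/Theorem~\ref{cl}) to reduce to non-solvable CP-groups, and then finish by a short case analysis drawn from the CP-group classification, exhibiting in each case a solvable subgroup meeting three distinct conjugacy classes.

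The one substantive difference is in how the CP-group step is organized. You pass to a non-abelian simple composition factor $S$, quote the list of \emph{simple} CP-groups, and then lift a triangle from $S$ back to $G$; this is why you (rightly) flag the possible $G$-fusion of $S$-classes at the end. The paper instead invokes the full classification of non-solvable CP-groups (Brandl, Heineken), which says that either $G$ itself is one of five specific groups (including the non-simple $M_{10}$), or $G$ has a solvable normal subgroup $N$ with $G/N$ one of four specified simple groups. In the latter case a triangle in $\Gamma_{sc}(G/N)$ lifts to $\Gamma_{sc}(G)$ with no fusion worry, since distinct $(G/N)$-classes automatically pull back to distinct $G$-classes. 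Thus the paper's route sidesteps the $\mathrm{Aut}(S)$-invariance issue you raise, at the cost of citing a slightly stronger classification theorem; your route is a bit more elementary on the structural side but leaves that invariance check to be done. (Your extra reduction to element orders at most $p^2$ is harmless but does not actually shorten the list, since every simple CP-group already satisfies it.)
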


\begin{proof}
If $G$ is solvable, then $k(G)=\omega(\Gamma_{sc}(G))+1$ (the extra $1$
coming from the identity of $G$), so $G$ has at most three conjugacy classes.
The groups listed in the theorem are all those having this property.

So we may assume that $G$ is non-solvable. If $G$ has an element whose order
is not a prime power, then some power (say $g$) of this element has order
$pq$, where $p$ and $q$ are distinct primes. Then $\Gamma_{sc}(G)$ contains a
clique of size~$3$, by Theorem~\ref{cl}.

So we may further assume that every element of $G$ has prime power order.

These groups were first studied by Higman~\cite{higman} in 1957;
Suzuki~\cite{suzuki} determined the simple groups with this property in 1965.
Subsequently all such groups have been classified~\cite{brandl,heineken}. The
story is somewhat tangled, perhaps due to the lack of a common name for the
class. Subsequently two names were proposed; a group with this property is
called a \emph{CP~group} by some authors, and an \emph{EPPO group} by others.
These groups have arisen in connection with other graphs defined on groups,
including the Gruenberg--Kegel graph (or prime graph) and the power graph:
see~\cite{cm}. The result we require is that a non-solvable group in which
every element has prime power order satisfies one of the following:
\begin{itemize}
\item[(a)] $G$ is one of $A_6$, $\PSL(2,7)$, $\PSL(2,17)$, $M_{10}$ or
$\PSL(3,4)$;
\item[(b)] $G$ has a nornal subgroup $N$ such that $G/N$ is $\PSL(2,4)$,
$\PSL(2,8)$, $\Sz(8)$ or $\Sz(32)$, and $N$ is a direct sum of copies of the
natural $G/N$-module over its field of definition.
\end{itemize}

Suppose first that we are in case (b). If we can find a
triangle in the solvable conjugacy class group of $G/N$, then it lifts to a
triangle in $\Gamma_{sc}(G)$. So it is enough to add the four possibilities
for $G/N$ to the list of groups in case (a).

In $\Sz(8)$, there are three conjugacy classes of elements of order $13$,
all represented in a cyclic subgroup of order $13$, giving us a triangle.
Similar arguments apply to $\Sz(32)$ (using an element of order $41$),
$\PSL(2,8)$ (order~$7$), and $\PSL(2,17)$ (order $3$ and two classes of
order~$9$). In $\PSL(2,4)$, a dihedral subgroup of order~$10$ meets two
conjugacy classes of elements of order $5$ and one class of involutions.
A similar argument applies to $A_6$ (using a dihedral group of order~$10$),
$\PSL(2,7)$ (using a non-abelian group of order $21$) $\PSL(3,4)$ (a
non-abelian group of order~$21$) and $M_{10}$ (a quaternion group of order~$8$
meets two conjugacy classes of elements of order~$4$ and one class of
involutions). All this information is easily obtained from the
\textit{$\mathbb{ATLAS}$ of Finite Groups}~\cite{atlas}.
\end{proof}

Now we come to the main result of the section.

\begin{thm}
For any positive integer $d$, there are only finitely many finite groups $G$
such that $\omega(\Gamma_{sc}(G))=d$.
\label{t:bound}
\end{thm}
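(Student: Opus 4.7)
The plan is to separate the solvable and non-solvable cases. The solvable case is immediate from Theorem~\ref{t:sol}, so assume $G$ is non-solvable; the strategy is then to bound $k(G)$ in terms of $d$ and invoke Landau's theorem (Proposition~\ref{p:cc}).

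Two observations will do much of the work. First, for every prime $p\mid|G|$, the conjugacy classes of non-trivial $p$-elements form a clique in $\Gamma_{sc}(G)$: given two $p$-elements $x,y$, since all Sylow $p$-subgroups are conjugate one can choose $g\in G$ so that $x$ and $gyg^{-1}$ lie in a common Sylow $p$-subgroup $P$, whence $\langle x,gyg^{-1}\rangle\le P$ is solvable. Thus there are at most $d$ conjugacy classes of non-trivial $p$-elements per prime. Second, by Theorem~\ref{cl}, $\tau(o(x))\le d+1$ for every $x\in G$, so element orders have at most $\log_2(d+1)$ distinct prime factors and each exponent is at most $d$.

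I would then reduce via the solvable radical $R=\Sol(G)$. Since $R$ is solvable, any two non-trivial elements of $R$ generate a solvable subgroup, so the $G$-conjugacy classes meeting $R\setminus\{1\}$ form a clique in $\Gamma_{sc}(G)$; hence there are at most $d$ such $G$-classes. Each such $G$-class contains at most $|G/R|$ $R$-classes (since $C_R(x)\le C_G(x)$), so $k(R)\le(d+1)|G/R|$. On the other hand, the quotient map induces a surjection from the non-trivial $G$-conjugacy classes of $G$ to the non-trivial $(G/R)$-conjugacy classes, and it sends adjacent classes to adjacent-or-equal classes because solvability lifts: if $\langle x'R,y'R\rangle$ is solvable then $\langle x',y'\rangle/(R\cap\langle x',y'\rangle)$ is solvable and $R\cap\langle x',y'\rangle$ is solvable, so $\langle x',y'\rangle$ is solvable. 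Therefore $\omega(\Gamma_{sc}(G/R))\le d$.

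The main obstacle is to bound $|G/R|$ in terms of $d$. Since $G/R$ has trivial solvable radical, its socle is a direct product of non-abelian finite simple groups $T_1\times\cdots\times T_n$, and $G/R$ embeds in the automorphism group of this socle. I would appeal to the classification of finite simple groups and, for each family, exhibit solvable subgroups meeting many distinct conjugacy classes (for example Borel subgroups in the Lie-type case, and suitable products of small-degree symmetric groups intersected with the alternating group in the alternating case); since any solvable subgroup of $G/R$ induces a clique in $\Gamma_{sc}(G/R)$, this bounds each $|T_i|$ and the number $n$ of factors in terms of $d$, and hence bounds $|G/R|$. Combined with the inequality $k(R)\le(d+1)|G/R|$, Proposition~\ref{p:cc} yields a bound on $|R|$, so $|G|=|R|\cdot|G/R|$ is bounded and the finiteness claim follows.
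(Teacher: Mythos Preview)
Your outline matches the paper's proof: dispose of solvable groups via Theorem~\ref{t:sol}, reduce modulo the solvable radical $R$ (your bound $k(R)\le (d+1)|G/R|$ is exactly the paper's Step~2), and then invoke CFSG to bound $|G/R|$. Two small points deserve attention. First, in the radical reduction the direction you need is that adjacency in $\Gamma_{sc}(G/R)$ \emph{lifts} to adjacency in $\Gamma_{sc}(G)$, which is precisely what your ``solvability lifts'' computation shows; but your sentence ``it sends adjacent classes to adjacent-or-equal classes'' describes the opposite (descent) direction, which would not yield $\omega(\Gamma_{sc}(G/R))\le d$. Second, and more substantively, the paper inserts an explicit reduction to the almost simple case (its Steps~3--5: bound the number of socle factors, note $C_G(S)=1$, pass to the bounded-index subgroup fixing each factor) before the CFSG case analysis, so that the solvable subgroups exhibited in each simple family are guaranteed to meet many \emph{$G$}-conjugacy classes rather than merely many classes of the simple factor $T_i$; your sketch jumps straight from ``socle is a product of simple groups'' to ``bound each $|T_i|$'' without this step, and without it the clique you produce in $\Gamma_{sc}(T_i)$ need not survive fusion in the larger group. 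Your two preliminary observations about $p$-element classes and $\tau(o(x))$ are correct but play no role in the argument as written.
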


This theorem can be regarded as a strengthening of Landau's result.

\begin{proof}
We assume that there are groups $G$ of arbitrarily large order such that
$\Gamma_{sc}(G)$ has clique number at most $d$, and aim for a contradiction.
We proceed in a number of steps.

\paragraph{Step 1} By Theorem~\ref{t:sol}, we can assume that $G$ is
non-solvable.

\paragraph{Step 2} We can assume that the solvable radical of $G$ is trivial.
For suppose that $\Sol(G)\ne1$ and that $|G/\Sol(G)|=m$, and suppose we know
that $m$ is bounded by a function of $d$. Then $\Sol(G)$ contains non-identity
elements from at most $d$ conjugacy classes of $G$, since these classes form a
clique in $\Gamma_{sc}(G)$. Since each such class splits into at most $m$
conjugacy classes in $\Sol(G)$, we see that $\Sol(G)$ has at most $dm$
non-trivial conjugacy classes, and hence has order bounded by a function of
$d$ and $m$. So if $m$ is also bounded by a function of $d$, then $|G|$ is
bounded by a function of $d$, as required.

\paragraph{Step 3} Let $G$ be a group with clique number bounded by $d$, and
let $S$ be the socle of $G$ (the product of the minimal normal subgroups). Then
$S$ is a product of non-abelian simple groups. We can assume that the number
of factors is bounded. For if we choose one non-identity element from each
factor, the chosen elements generate an abelian group; and elements of this
group which have different numbers of non-identity coordinates are not
conjugate in $G$.

\paragraph{Step 4} $C_G(S)=1$. For $C_G(S)$ is a normal subgroup of $G$, and
so contains a minimal normal subgroup, say $M$. But then
$M\le S\cap C_G(S)=Z(S)$, contradicting the fact that $S$ is a product
of centreless groups.

\paragraph{Step 5} It follows that $G$ acts faithfully on $S$ by conjugation.
Elements of $G$ permute the factors. Since their number is bounded, we can
assume that $G$ fixes all the factors; so the socle is simple, and $G$ is
almost simple.

\paragraph{Step 6} Now we invoke the Classification of Finite Simple Groups.
We can assume that $G$ is sufficiently large that its socle is not a sporadic
group. So there are three cases:
\begin{itemize}
\item[Case 1:] $S$ is alternating, so $G=A_n$ or $S_n$. Let $H$ be the subgroup
of $G$ with $\lfloor n/2\rfloor$ orbits of size $2$ (and one fixed point if
$n$ is odd). Then $H$ is abelian, and elements of $H$ may be products of any
even number of transpositions up to $2\lfloor n/4\rfloor$. But elements with
different numbers of transpositions are non-conjugate, so $\Gamma_{sc}(G)$
contains a clique of size $\lfloor n/4\rfloor$. Thus $n$ is bounded.
\item[Case 2:] $G$ is classical of large rank. 

Suppose first that the socle of $G$ is $\PSL(n,q)$ for large $n$. Then dropping
to a subgroup of index at most $2$, $G$ is the quotient of a subgroup of
$\PGamL(n,q)$ by the subgroup of scalar matrices. The diagonal matrices with
determinant~$1$ mod scalars form an abelian group of rank at least $n-2$, and
elements with different numbers of non-$1$ diagonal entries (greater than
$n/2$) are pairwise non-conjugate, giving a large clique.

Now suppose that $G$ is symplectic, unitary, or orthogonal. Then a cover of
$G$ acts on its natural module; this module is an orthogonal direct sum of $r$
hyperbolic planes and an isotropic space (see~\cite{taylor}), where $r$ is the
Witt index (and is at least $(n-2)/2$, where $n$ is the dimension of the 
module). So the cover of $G$ contains the direct product of $r$ copies of the
$2$-dimensional classical group, and the same contradiction is obtained.
\item[Case 3:] $G$ is of Lie type over a large field $\GF(q)$. In this case,
$G$ contains a subgroup $C$ of index at most $2$ in the multiplicative group of
the field (inside a subgroup of Lie rank~$1$, which is either $\PSL(2,q)$ or
$\Sz(q)$). Every conjugate of a generator $g$ of $C$ is the image of
$g$ or $g^{-1}$ under a field automorphism. So, if $q=p^r$ with $p$ prime,
then a conjugacy class of $G$ contains at most $2r$ generators. But altogether
there are at least $\phi((q-1)/2)$ generators, where $\phi$ is Euler's function.
The ratio of these two numbers tends to infinity with $q$. So a clique of size
larger than $d$ can be found if $q$ is sufficiently large.
\end{itemize}
\end{proof}

An alternative proof of the theorem runs as follows. Using arguments as above,
we reduce to the case where $G$ is a simple group. Then we apply a recent
result of Hung and Yang~\cite{hy}, asserting that the number of prime divisors
of a finite simple group is bounded above by a (quartic) function of the
maximum number of prime divisors of an element order. If the clique number of
the SCC-graph is bounded, then the number of prime divisors of an element order
is bounded, and hence the number of prime divisors of $|G|$ is bounded.

Next we claim that the prime divisors are bounded.
Let $p_1,\ldots,p_s$ be the prime divisors of $|G|$. We show by induction
on~$i$ that $p_i$ is bounded in terms of $d$. Since the solvable radical is
trivial, we may assume that $p_1=2$. Now let $i>1$ and take an element $x$
in~$G$ of order $p=p_i$; let $C=\langle x\rangle$. The $p-1$ generators of $C$
can lie in at most $d$ distinct conjugacy classes. Hence the cyclic group
$N_G(C)/C_G(C)$ has order at least $(p-1)/d$ (since an element of $G$
conjugating a generator of $C$ to another must normalize $C$).
On the other hand, every prime divisor of the order of $N_G(C)/C_G(C)$ divides
$p-1$ and therefore lies in $\{p_1,\ldots,p_{i-1}\}$. Moreover, the exponents of
the Sylow $p_j$-subgroups are bounded by $p_j^d$. So by induction,
$|N_G(C)/C_G(C)|$ is bounded in terms of $d$. Consequently, $p$ is bounded in
terms of $d$ as well.

Hence the exponent of $G$ is bounded. But there are only finitely many simple
groups with given exponent. (Jones~\cite{jones} showed that any infinite set
of the then known finite simple groups generates the variety of all groups, from
which it follows that their exponents are unbounded. Since then, only finitely
many further simple groups have been found, so the result is still valid. A
stronger result is proved by Babai, Goodman and Pyber~\cite{bgp}, namely,
there are only finitely many simple groups with a given largest prime divisor
of their order.) So we are done.

\medskip

We have not attempted to write down an explicit function bounding $|G|$ in
terms of the clique number of $\Gamma_{sc}(G)$.

\begin{cor}
Given $g$, there are only finitely many finite groups $G$ for which
$\Gamma_{sc}(G)$ can be embedded in a surface of genus~$g$.
\end{cor}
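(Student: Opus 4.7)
The plan is to combine Theorem~\ref{t:bound} with a classical fact from topological graph theory which says that complete graphs cannot be embedded in surfaces of low genus. The key observation is that if $H$ is a subgraph of a graph $\Gamma$ and $\Gamma$ embeds in a surface of genus~$g$, then so does $H$; therefore the genus of $\Gamma$ is at least the genus of any complete subgraph it contains. In particular, if $\omega(\Gamma_{sc}(G))=d$, then $K_d$ embeds in $\Gamma_{sc}(G)$ and hence in the chosen surface.

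By the Ringel--Youngs theorem, the genus of $K_n$ is $\lceil (n-3)(n-4)/12\rceil$ for $n\geq 3$; in particular it tends to infinity with $n$. So if $\Gamma_{sc}(G)$ embeds in a surface of genus~$g$, then its clique number $d$ satisfies $\lceil(d-3)(d-4)/12\rceil\leq g$, which bounds $d$ by an explicit function of $g$. (One does not even need the exact value: Euler's formula alone gives $|E|\leq 3|V|+6(g-1)$ for any simple graph embedded in an orientable surface of genus~$g$, and applying this to $K_d$ forces $d$ to be bounded in terms of $g$.)

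Having bounded $d=\omega(\Gamma_{sc}(G))$ in terms of $g$, we apply Theorem~\ref{t:bound} to conclude that the number of finite groups $G$ with $\omega(\Gamma_{sc}(G))=d$ is finite for each admissible~$d$. Taking the finite union over the finitely many values of $d$ compatible with genus~$g$ gives the corollary. There is no real obstacle here; the only subtlety worth noting is that one must distinguish between orientable and non-orientable genus if both are allowed, but the same argument works in either setting since the non-orientable genus of $K_n$ also tends to infinity with $n$ (by the Ringel theorem for non-orientable surfaces, or again by Euler's formula).
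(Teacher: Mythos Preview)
Your proof is correct and follows essentially the same route as the paper: bound the clique number via the fact that the genus of $K_n$ is unbounded in $n$, then invoke Theorem~\ref{t:bound}. The paper's argument is a one-line version of yours, simply stating that the genus of $K_n$ is an unbounded function of $n$; your additional detail (the Ringel--Youngs formula, the Euler-formula alternative, and the remark on the non-orientable case) is not needed but does no harm.
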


This holds because the genus of an embedding of the complete graph $K_n$ is
an unbounded function of $n$.

\section{Distance in SCC-Graph for locally finite group}
A locally finite group is a group for which every finitely generated subgroup is finite. An element of a group is said to be a $p$-element if the order of the element is a power of $p$, where $p$ is a prime. In this section we obtain some results on  distance between two vertices of $\Gamma_{sc}(G)$ for some locally finite groups,   analogous to certain results in \cite{hlm, ME2017}. 

\begin{prop}\label{cp1}
Let $G$ be a locally finite group. If $x,y\in G\setminus \{1\}$ are $p$-elements, where $p$ is a prime, then 
$d(x^G, y^G)\leq 1$.
\end{prop}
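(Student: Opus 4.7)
The plan is to use local finiteness to reduce to a finite subgroup, and then use Sylow's theorem inside that subgroup to find conjugates of $x$ and $y$ lying in a common $p$-subgroup, which is automatically solvable.

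First I would consider the finitely generated subgroup $H := \langle x, y \rangle \le G$; by the hypothesis that $G$ is locally finite, $H$ is finite. Both $x$ and $y$ are $p$-elements of $H$, so $\langle x \rangle$ and $\langle y \rangle$ are $p$-subgroups of $H$. Fix a Sylow $p$-subgroup $P$ of $H$. By Sylow's theorem, every $p$-subgroup of $H$ is contained in some conjugate of $P$, so there exist $h_1, h_2 \in H$ with $h_1 x h_1^{-1} \in P$ and $h_2 y h_2^{-1} \in P$.

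Then $\langle h_1 x h_1^{-1}, h_2 y h_2^{-1} \rangle \le P$. Being a subgroup of a $p$-group, this subgroup is itself a $p$-group, hence nilpotent, hence solvable. Since $h_1, h_2 \in H \le G$, the elements $h_1 x h_1^{-1}$ and $h_2 y h_2^{-1}$ are $G$-conjugates of $x$ and $y$ respectively, so by definition $x^G$ and $y^G$ are either equal or adjacent in $\Gamma_{sc}(G)$. In either case $d(x^G, y^G) \le 1$, as required.

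There is no real obstacle here: the whole argument rests on the standard fact that any two $p$-elements of a finite group can be simultaneously conjugated into a fixed Sylow $p$-subgroup, and the trivial observation that $p$-groups are solvable. The only mild point of care is that the conjugating elements must lie in $G$ (not in some external overgroup), which is automatic since they already lie in $H \le G$.
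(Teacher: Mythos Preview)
Your proof is correct and follows essentially the same approach as the paper: pass to the finite subgroup $H=\langle x,y\rangle$ and use Sylow's theorem there to place conjugates of $x$ and $y$ in a common $p$-subgroup. The paper streamlines one step by choosing $P$ to contain $x$ from the outset, so only $y$ needs to be conjugated, but this is a cosmetic difference.
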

\begin{proof}
Since $G$ is a locally finite group and $x, y\in G\setminus \{1\}$ are $p$-elements, the subgroup $\langle x, y\rangle$ is finite. Let $P$ be a Sylow $p$-subgroup of  $\langle x, y\rangle$  containing $x$. Then $y^g = gyg^{-1} \in P$ for some $g \in G$ since all the Sylow $p$-subgroups are conjugate. Therefore,  $\langle x, y^g\rangle$ is solvable and so $d(x^G, y^G) \leq 1$. 
\end{proof}

\begin{prop}\label{coprime}
Let $G$ be a locally finite group. If $x, y \in G$ are of non-coprime orders, then 
$d(x^G,y^G)\leq 3$.
If either $x$ or $y$ is of prime order then $d(x^G,y^G)\leq 2$.
\end{prop}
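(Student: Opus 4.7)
The plan is to reduce the general case to Proposition~\ref{cp1} by extracting $p$-power subelements from $x$ and $y$ using a common prime $p$ dividing both orders. Since $\gcd(o(x),o(y))>1$, pick a prime $p$ dividing both. Write $o(x)=p^a m$ and $o(y)=p^b n$ with $a,b\geq 1$ and $\gcd(p,mn)=1$. Then $u:=x^m$ is a nontrivial $p$-element lying in $\langle x\rangle$, and $v:=y^n$ is a nontrivial $p$-element lying in $\langle y\rangle$. Because $\langle x\rangle$ and $\langle y\rangle$ are (cyclic hence) abelian, whenever $x^G\ne u^G$ we have $x^G\sim u^G$ in $\Gamma_{sc}(G)$, and similarly for $y^G$ and $v^G$.

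By Proposition~\ref{cp1}, applied to the $p$-elements $u$ and $v$ in the locally finite group $G$, we have $d(u^G,v^G)\leq 1$. Concatenating, the walk
\[
x^G \,\text{---}\, u^G \,\text{---}\, v^G \,\text{---}\, y^G
\]
(with each of the three edges possibly degenerate, i.e.\ an equality of conjugacy classes) shows that $d(x^G,y^G)\leq 3$, proving the first assertion.

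For the second statement, assume without loss of generality that $o(x)=p$ is prime. Since $\gcd(o(x),o(y))>1$, necessarily $p\mid o(y)$, so $v:=y^n$ (defined as above) is a nontrivial $p$-element. Now $x$ itself is a $p$-element, so Proposition~\ref{cp1} gives $d(x^G,v^G)\leq 1$ directly. If $v^G=y^G$ we are done; otherwise $\langle y,v\rangle=\langle y\rangle$ is abelian and hence solvable, so $v^G\sim y^G$, which yields $d(x^G,y^G)\leq 2$.

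The only subtle point is the bookkeeping around possible coincidences of conjugacy classes (e.g.\ $x^G=u^G$, or $v^G=y^G$), but these only shorten the walk, so they never hurt the bound. The local finiteness assumption is used exclusively to guarantee that $\langle u,v\rangle$ is finite so that Proposition~\ref{cp1} applies; no further structural input is needed.
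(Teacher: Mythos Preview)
Your proof is correct and follows essentially the same route as the paper's: reduce to Proposition~\ref{cp1} by passing to suitable $p$-power elements $x^m$ and $y^n$, then chain together the three edges $x^G\sim u^G\sim v^G\sim y^G$. The only cosmetic difference is that the paper takes $m=o(x)/p$ (so $x^m$ has order exactly $p$) rather than the full $p'$-part of $o(x)$, and you are a bit more explicit about handling coincident conjugacy classes, but the argument is identical in substance.
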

\begin{proof}
Let $o(x) = pm$ and $o(y) = pn$, where $p$ is a prime and $m, n$ are positive integers. Then $x^m$ and $y^n$ are nontrivial $p$-elements of $G$. Therefore, by Proposition \ref{cp1}, we have
\[
d((x^m)^G, (y^n)^G)\leq 1.
\]
Clearly, $d(x^G,(x^m)^G)\leq 1$ and $ d((y^n)^G, y^G)\leq 1$. Therefore, if $x^G \ne y^G$ then $x^G \sim (x^m)^G \sim (y^n)^G \sim y^G$ is a path from $x^G$ to $y^G$. 
Hence, $d(x^G,y^G)\leq 3$.

Suppose that $o(x) = pm$ and $o(y) = p$. Then $x^m$ and $y$ are nontrivial $p$-elements of $G$. Therefore, by Proposition \ref{cp1}, we have
\[
d((x^m)^G, y^G)\leq 1.
\] 
Thus $x^G \sim (x^m)^G \sim y^G$ is a path from $x^G$ to $y^G$. 
Hence, $d(x^G,y^G)\leq 2$.
\end{proof}

\begin{prop}\label{cp}
Let $G$ be a locally finite group and $x, y \in G$. Suppose $p$ and $q$ are prime divisors of $o(x)$ and $o(y)$, respectively, and that $G$ has an element of order $pq$. Then
\begin{enumerate}
\item $d(x^G, y^G) \leq 5$, and moreover $d(x^G, y^G) \leq 4$ if either $x$ or $y$ is of prime power order.
\item If either a Sylow $p$-subgroup or a Sylow $q$-subgroup of $G$ is a cyclic or generalized quaternion finite group, then $d(x^G, y^G) \leq 4$. Moreover, $d(x^G, y^G) \leq 3$ if either $x$ or $y$ is of prime order.
\item If both Sylow $p$-subgroup and  Sylow $q$-subgroup of $G$ are either cyclic or generalized quaternion finite groups, then $d(x^G, y^G) \leq 3$. Moreover, $d(x^G, y^G) \leq 2$ if either $x$ or $y$ is of prime order.
\end{enumerate}
\end{prop}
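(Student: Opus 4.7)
The overall strategy is to build explicit paths between $x^G$ and $y^G$ in $\Gamma_{sc}(G)$, using the element $z \in G$ of order $pq$ as a bridge between the $p$-side and the $q$-side. Let $x_p := x^{o(x)/p^a}$ be the $p$-part of $x$, where $p^a \mid\mid o(x)$, and define $y_q$ analogously; both are nontrivial since $p \mid o(x)$ and $q \mid o(y)$. Since $\langle x, x_p\rangle = \langle x\rangle$ is cyclic (hence solvable), we have $x^G \sim x_p^G$ whenever these vertices differ, and similarly $y^G \sim y_q^G$. Likewise $\langle z^q, z^p\rangle \subseteq \langle z\rangle$ is cyclic, so $(z^q)^G \sim (z^p)^G$. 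Finally, Proposition~\ref{cp1} yields $d(x_p^G, (z^q)^G) \le 1$ and $d(y_q^G, (z^p)^G) \le 1$, as all four elements are nontrivial $p$- or $q$-elements.

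Concatenating these edges gives the walk
\[
x^G \sim x_p^G \sim (z^q)^G \sim (z^p)^G \sim y_q^G \sim y^G,
\]
proving $d(x^G, y^G) \le 5$ and settling the first half of (1). If $x$ has prime power order, then $o(x)$ is a power of $p$ (as $p\mid o(x)$), forcing $x = x_p$ so the first edge collapses; the case of $y$ is symmetric. This gives the bound $4$ under the prime-power hypothesis.

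For (2) and (3), the key claim is: if the Sylow $p$-subgroup of $G$ is cyclic or generalized quaternion, then $x^G \sim (z^q)^G$ directly. Work in the finite subgroup $H := \langle x, z\rangle$, whose Sylow $p$-subgroup remains cyclic or generalized quaternion (both classes are closed under taking subgroups). Sylow's theorem in $H$ yields some $h \in H$ such that $x_p$ and $(z^q)^{h^{-1}}$ lie in a common Sylow $p$-subgroup $P_1 \le H$. Because $P_1$ has a unique subgroup of order $p$, necessarily equal to $\langle x_p^{p^{a-1}}\rangle$, we conclude $(z^q)^{h^{-1}} \in \langle x_p\rangle \subseteq \langle x\rangle$, whence $\langle x, (z^q)^{h^{-1}}\rangle = \langle x\rangle$ is cyclic. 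If moreover $o(x) = p$, then $x^h$ itself lies in $\langle z\rangle$, so $\langle x^h, z^p\rangle \subseteq \langle z\rangle$ is cyclic and $x^G \sim (z^p)^G$.

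With this claim in hand, (2) follows by replacing the subpath $x^G \sim x_p^G \sim (z^q)^G$ by the single edge $x^G \sim (z^q)^G$ (or symmetrically on the $y$-side), giving length $4$; when $x$ (resp.\ $y$) has prime order, the sharper adjacency $x^G \sim (z^p)^G$ (resp.\ $y^G \sim (z^q)^G$) truncates the walk to length $3$. For (3), applying the claim on both ends produces the chain $x^G \sim (z^q)^G \sim (z^p)^G \sim y^G$ of length $3$, and combining both prime-order refinements yields $x^G \sim (z^p)^G \sim y^G$ of length $2$. The main obstacle is the direct adjacency $x^G \sim (z^q)^G$ in the cyclic/generalized-quaternion Sylow case; everything else is routine path concatenation using the basic observations from the first paragraph.
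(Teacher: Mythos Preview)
Your argument follows the same architecture as the paper's: an element $z$ of order $pq$ supplies the bridge $(z^q)^G\sim(z^p)^G$, Proposition~\ref{cp1} connects the $p$-part of $x$ to $z^q$ and the $q$-part of $y$ to $z^p$, and in (b),(c) the unique-subgroup-of-order-$p$ property of cyclic or generalized quaternion $p$-groups collapses $x^G\sim x_p^G\sim(z^q)^G$ to a single edge. Two minor differences: you use the full $p$-part $x_p$ where the paper uses $x^m$ of exact order $p$, and you pass to the finite subgroup $H=\langle x,z\rangle$ and apply ordinary Sylow theory there, whereas the paper works with Sylow subgroups of $G$ itself via locally finite Sylow theory. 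Your route is arguably cleaner, though you should still say one word about why a Sylow $p$-subgroup of $H$ inherits the cyclic/quaternion property (it embeds, up to conjugacy, in a Sylow $p$-subgroup of $G$).

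Two small slips in the ``moreover'' clauses need fixing. In (2) you have, without loss of generality, assumed the Sylow $p$-subgroup is the nice one; your parenthetical ``resp.\ $y^G\sim(z^q)^G$'' for the case $o(y)=q$ then invokes the refined claim on the $q$-side, which is not available. The correct observation is simply that $o(y)=q$ forces $y=y_q$, so the final edge of the length-$4$ walk collapses and you still get length $3$. In (3), the hypothesis is that \emph{one} of $x,y$ has prime order, not both; your displayed path $x^G\sim(z^p)^G\sim y^G$ is correct when $o(x)=p$, but it uses the prime-order refinement only on the $x$-side together with the basic claim (Sylow $q$ nice) on the $y$-side, so ``combining both prime-order refinements'' misdescribes what is happening.
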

\begin{proof}
Let $o(x) = pm$ and $o(y) = qn$ for some positive integers $m, n$. 
Let  $a \in G$ be an element of order $pq$. Then $o(a^q) = p$ and $o(a^p) = q$. Also, $a^p$ commutes with $a^q$.
\begin{enumerate}
\item We have 
\begin{equation*}
d(x^G,(x^m)^G)\leq 1, \,\,  d((a^q)^G,(a^p)^G)=1, \text{ and } d((y^n)^G, y^G)\leq 1.
\end{equation*}
Since $o(x^m) = o(a^q) = o(y^n) = p$, by Proposition \ref{cp1}, we have
\begin{equation*}
d((x^m)^G,(a^q)^G)\leq 1 \text{ and } d((a^p)^G,(y^n)^G)\leq 1.
\end{equation*}
Therefore, $d(x^G, y^G)\leq 5$.

If  $o(x) = p^s$  for some positive integer $s$ then, by Proposition \ref{cp1}, we have  $d(x^G, (a^q)^G) \leq 1$. Similarly, if   $o(y) = q^t$  for some positive integer $t$ then  $d(y^G, (a^p)^G) \leq 1$. Therefore, $d(x^G, y^G)\leq 4$.
\item\label{cyq} Without any loss of generality assume that Sylow $p$-subgroup of $G$ is either a cyclic group or a generalized quaternion finite group.  Let $P$ and $Q$ be two Sylow $p$-subgroups of $G$ containing $x^m$ and $a^q$ respectively. Since $P$ is finite, by \cite[Theorem 14.3.4]{Robinson}, $Q$ is also finite and $P = gQg^{-1}$ for some $g \in G$ and so $ga^qg^{-1} \in P$. Therefore, $\langle x^m \rangle$ and $\langle ga^qg^{-1} \rangle$ are subgroups of $P$ having order $p$. Since $P$ is cyclic  or a generalized quaternion  group, by \cite[Theorem 5.3.6]{Robinson}, we have $\langle x^m \rangle =\langle ga^qg^{-1} \rangle$. Therefore, $ga^qg^{-1} = (x^m)^i$ for some integer $i$ and so  $\langle x, ga^qg^{-1}\rangle = \langle x, (x^m)^i\rangle = \langle x \rangle$.  Hence $d(x^G, (a^q)^G) \leq 1$. We also have
\begin{equation*}
d((a^q)^G,(a^p)^G)=1, \,\, d((a^p)^G,(y^n)^G)\leq 1,\text{ and } d((y^n)^G, y^G)\leq 1.
\end{equation*}
Thus $d(x^G, y^G)\leq 4$.

If $o(x) = p$ then $\langle x\rangle =\langle ga^qg^{-1}\rangle$. Therefore, $x= ga^{qt}g^{-1}$ for some integer $t$. We have $x^G = (a^{qt})^G$ and so $\langle a^{qt}, a^p\rangle$ is abelian. Hence, $d(x^G, (a^p)^G) \leq 1$ and so $d(x^G, y^G)\leq 3$.   
\item If both  Sylow $p$-subgroup and  Sylow $q$-subgroup of $G$ are either cyclic or generalized quaternion finite groups, then proceeding as part \ref{cyq} we get
\begin{equation*}
d(x^G,(a^q)^G)\leq 1,\,\, d((a^q)^G,(a^p)^G)=1, \text{ and } d((a^p)^G, y^G)\leq 1.
\end{equation*}
Therefore, $d(x^G, y^G)\leq 3$.

If $o(x) = p$ then proceeding as in part (b), we have $d(x^G, (a^p)^G) \leq 1$ and so $d(x^G, y^G)\leq 2$.
\end{enumerate}
\end{proof}

We conclude this section with the following consequence.

\begin{thm}
Let $G$ be a finite group. Let  $H$ and $K$ be two subgroups of $G$ such that $H$ is  normal in $G$,  $G = HK$ and $\Gamma_{sc}(H), \Gamma_{sc}(K)$ are connected. If there exist two elements $h \in H \setminus \{1\}$ and $x \in G \setminus H$ such that $h^G$ and $x^G$ are connected in $\Gamma_{sc}(G)$, then $\Gamma_{sc}(G)$ is connected.
\end{thm}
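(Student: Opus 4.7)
The plan is to show that every vertex $y^G$ of $\Gamma_{sc}(G)$ lies in the connected component $C$ of $h^G$. For $y\in H\setminus\{1\}$, the connectedness of $\Gamma_{sc}(H)$ produces a path $y^H = c_0^H \sim\cdots\sim c_n^H = h^H$, and each $H$-edge $c_i^H\sim c_{i+1}^H$ lifts to $\Gamma_{sc}(G)$ because the solvable subgroup witnessed by $H$-conjugates of $c_i,c_{i+1}$ is also witnessed by $G$-conjugates. Hence $V_H := \{c^G : c\in H\setminus\{1\}\}\subseteq C$, and the bridge hypothesis places $x^G\in C$.

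For $y\in G\setminus H$ I would split on $\langle y\rangle\cap H$. If $\langle y\rangle\cap H\neq\{1\}$ and $m=o(yH)$, then $y^m\in H\setminus\{1\}$; the cyclic group $\langle y,y^m\rangle=\langle y\rangle$ is solvable, and the classes $y^G$ and $(y^m)^G$ are distinct (since $H\triangleleft G$ with $y\notin H$ but $y^m\in H$), so $y^G\sim (y^m)^G\in V_H\subseteq C$. Otherwise $\langle y\rangle\cap H=\{1\}$; since $\langle y\rangle$ is solvable, $y^G$ is adjacent in $\Gamma_{sc}(G)$ to $(y^{o(y)/p})^G$ for any prime $p\mid o(y)$, so I may assume $o(y)=p$. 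If $p\mid |H|$, Cauchy furnishes a nontrivial $p$-element $h_p\in H$, and the Sylow/$p$-group argument of Proposition~\ref{cp1} (applied inside the finite $G$) gives $y^G\sim h_p^G\in V_H\subseteq C$.

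The crux is $p\nmid|H|$, where I exploit $G=HK$. Setting $L:=H\langle y\rangle$ (so $|L|=|H|p$), the Dedekind modular law yields $L=H(L\cap K)$; a Sylow $p$-subgroup $P$ of $L\cap K$ has order $p$ and satisfies $P\cap H=\{1\}$ (because $p\nmid|H\cap K|$), so $P$ is a complement of $H$ in $L$. Schur--Zassenhaus (legitimate since $L/H\cong\langle y\rangle$ is cyclic, hence solvable) conjugates $P$ onto $\langle y\rangle$ inside $L$, so a $G$-conjugate of $y$ lies in $P\leq K$; that is, $y^G\in V_K^{(G)}:=\{k^G:k\in K\setminus\{1\}\}$. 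By the same lifting argument applied to $\Gamma_{sc}(K)$, the set $V_K^{(G)}$ lies in a single component of $\Gamma_{sc}(G)$. If $H\cap K\neq\{1\}$, then $V_K^{(G)}\cap V_H\neq\emptyset$ forces $V_K^{(G)}\subseteq C$, finishing the proof. In the subcase $H\cap K=\{1\}$, I would rerun the case analysis on the bridge element $x\in G\setminus H$: if $x$ falls into the Schur--Zassenhaus case, $x^G\in V_K^{(G)}\cap C$ immediately places $V_K^{(G)}\subseteq C$; otherwise the bridge only attaches $x^G$ to $V_H$, and the main obstacle is extracting a $V_K^{(G)}$-representative in $C$ from the transition edge of the bridge path between $V_H$ and the $(G\setminus H)$-side, whose witnessing solvable subgroup must be mined for a usable link to $K$.
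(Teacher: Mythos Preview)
Your argument is sound through the treatment of $V_H$ and the lifting of $\Gamma_{sc}(K)$-connectivity to show $V_K^{(G)}$ lies in a single component, but the final subcase is genuinely incomplete --- you say as much by calling it ``the main obstacle.'' When $H\cap K=\{1\}$ and the bridge element $x$ lands in your case~A or~B1 rather than the Schur--Zassenhaus case~B2, you have linked $x^G$ only to $V_H$, not to $V_K^{(G)}$, and the plan to ``mine'' a solvable witness along the bridge path for a link to $K$ is not carried out. (It \emph{can} be rescued: in case~A with $n=o(x)$ and $m=o(xH)$, the power $x^{n/m}$ has order $m$ dividing $|K|$ and trivial intersection with $H$, so in the coprime setting it falls into B2; but you did not make this move.)

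The paper sidesteps the entire case split with one observation that also makes Schur--Zassenhaus unnecessary. For any $a\in G\setminus H$ the coset $aH$ is nontrivial in $G/H\cong K/(H\cap K)$, so $o(aH)>1$ divides both $o(a)$ and $|K|$; hence $\gcd(o(a),|K|)>1$, and Cauchy furnishes $k_1\in K$ with $\gcd(o(a),o(k_1))\neq 1$. Proposition~\ref{coprime} then gives $d(a^G,k_1^G)\le 3$, so \emph{every} class outside $H$ --- including the bridge class $x^G$ --- is connected to $V_K^{(G)}$. Since $x^G$ is also connected to $V_H$ by hypothesis, the two blocks $V_H$ and $V_K^{(G)}$ lie in the same component, and every vertex meets it (classes from $H\setminus\{1\}$ via $V_H$, classes from $G\setminus H$ via $V_K^{(G)}$). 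This single line replaces your cases A, B1, B2 and closes the gap.
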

\begin{proof}
Let $a, b\in G$ such that $a^G$ and $b^G$ are two distinct vertices in $\Gamma_{sc}(G)$. 

If $a, b\in H$ then there exists a path from $a^H$ to $b^H$, since $\Gamma_{sc}(H)$ is connected. Hence, $a^G$ and $b^G$ are connected.  Let  $a \notin H$ and $o(a) = n$. Let $f: G/H \to K/(H\cap K)$ be an isomorphism  and $f(aH) = x(H\cap K)$, where $x \in K$. Then $x^n(H\cap K) = f(a^nH) = H\cap K$ and so $x^n \in H\cap K$. Let $d = \gcd(o(a), |K|)$. Then there exist integers $r, s$ such that 
\[
x^d = x^{nr + |K|s} = (x^n)^r.(x^{|K|})^s \in H\cap K.
\] 
Therefore, $d > 1$. Let $p$ be a prime divisor of $d$. Then there exists an element $k_1 \in K$ such that $\gcd(o(a), o(k_1)) \neq 1$.  Therefore, by Proposition \ref{coprime}, there is a path from $a^G$ to $k_1^G$. Similarly, if $b \notin H$ then there exists an element $k_2 \in K$ such that there is a path from $k_2^G$ to $b^G$. We have $k_1^G = k_2^G$ or there is a path from $k_1^K$ to $k_2^K$, since $\Gamma_{sc}(K)$ is connected. Therefore, $k_1^G = k_2^G$ or there is a path from $k_1^G$ to $k_2^G$. Thus  $a^G$ and $b^G$ are connected. If $b \in H$ then, by  given conditions, there exist two elements $h \in H \setminus \{1\}$ and $x \in G \setminus H$ such that  there is a path from $x^G$ to $h^G$ and a path from $h^G$ to $b^G$ (since $\Gamma_{sc}(H)$ is connected). Since $x \notin H$, proceeding as above we get a path from $x^G$ to $k_3^G$ for some $k_3 \in K$ and hence a path from $a^G$ to $x^G$. Thus we get a path from $a^G$ to $b^G$. Hence, $\Gamma_{sc}(G)$ is connected. 
\end{proof}

\subsection*{Acknowledgements}

The fourth author is supported by the German Research Foundation (SA 2864/1-2 and SA 2864/3-1).

The authors declare no conflict of interest.

No data were generated or used in the course of this project.

\end{document}